\documentclass[leqno,12pt]{article} 
\setlength{\textheight}{23cm}
\setlength{\textwidth}{17cm}
\setlength{\oddsidemargin}{0cm}
\setlength{\evensidemargin}{0cm}
\setlength{\topmargin}{0cm}
\usepackage{amsmath, amssymb}
\usepackage{amsthm} 
%

%
%
\theoremstyle{plain} 
\newtheorem{theorem}{\indent\sc Theorem}[section]
\newtheorem{lemma}[theorem]{\indent\sc Lemma}

\newtheorem{proposition}[theorem]{\indent\sc Proposition}

\theoremstyle{definition} 

\newtheorem{remark}[theorem]{\indent\sc Remark}

%

%

\makeatletter
\def\address#1#2{\begingroup
\noindent\parbox[t]{7.8cm}{%
\small{\scshape\ignorespaces#1}\par\vskip1ex
\noindent\small{\itshape E-mail address}%
\/: #2\par\vskip4ex}\hfill%
\endgroup}%
\makeatother
%
\pagestyle{myheadings}
\markright{ } 
\title{Classical discrete operators on variable $\ell^{p(\cdot)}(\mathbb{Z})$ spaces} 
\author{
%
%
\textsc{Pablo Rocha} 
}
\date{} 
%

\begin{document}

\maketitle

\footnote{ 
2020 \textit{Mathematics Subject Classification}: 46B45, 44A15, 47B06, 42B25, 26D15}
\footnote{ 
\textit{Key words and phrases}:
variable sequence spaces, discrete Hilbert transform, discrete Riesz potential, discrete weights, Rubio de Francia algorithm
}

\begin{abstract}
We show, by applying discrete weighted norm inequalities and the Rubio de Francia algorithm, that the discrete Hilbert transform and discrete Riesz potential are bounded on variable $\ell^{p(\cdot)}(\mathbb{Z})$ spaces whenever the discrete Hardy-Littlewood maximal is bounded on $\ell^{(p(\cdot)/r)'}(\mathbb{Z})$, for some $r > 1$. We also obtain vector-valued inequalities for the discrete fractional maximal operator.
\end{abstract}

\section{Introduction}   

Given a sequence $p(\cdot) : \mathbb{Z} \to [1, \infty)$, we define the variable $\ell^{p(\cdot)} = \ell^{p(\cdot)}(\mathbb{Z})$ space to be
the set of all complex sequences $a = \{ a(i)  \}_{i \in \mathbb{Z}}$ such that
\[
\| a \|_{\ell^{p(\cdot)}} = \inf \left\{ \lambda > 0 : \sum_{i \in \mathbb{Z}} |a(i)/\lambda|^{p(i)} \leq 1 \right\} < \infty.
\]
It is well known that the couple $(\ell^{p(\cdot)},  \| \cdot \|_{\ell^{p(\cdot)}})$ results a Banach space. W. Orlicz \cite{Orlicz} and H. Nakano \cite{Nakano} were the first in studying these kind of spaces (see also \cite{Edmunds}, \cite{Nekvinda}, \cite{Nekvinda2}). 
P. H\"ast\"o \cite{Hasto} showed that the variable $\ell^{p(\cdot)}$ spaces have applications to the study of operators on variable Lebesgue spaces in $\mathbb{R}^n$.

Let $0 \leq \alpha < 1$, for a sequence $a = \{ a(i) \}_{i \in \mathbb{Z}}$, define the fractional Hardy-Littlewood maximal sequence 
$M_{\alpha}a$ by
\begin{equation} \label{Fract max}
(M_{\alpha}a)(j) = \sup \frac{1}{(n-m+1)^{1-\alpha}} \sum_{i=m}^n |a(i)|, \,\,\,\, j \in \mathbb{Z},
\end{equation}
where the supremum is taken over all $m, n \in \mathbb{Z}$ such that $m \leq j \leq n$. When $\alpha=0$, we have that $M_0 = M$, where $M$ is the discrete Hardy-Littlewood maximal operator. Recently, A. Swarup and A. Alphonse \cite{Swarup} proved that the discrete fractional maximal $M_{\alpha}$ is a bounded operator $\ell^{p(\cdot)} \to \ell^{q(\cdot)}$, for $\frac{1}{p(\cdot)} - \frac{1}{q(\cdot)} = \alpha$,  under the assumption that
\[
1 < p_{-}:= \inf \{ p(i) : i\in \mathbb{Z} \} \leq p_{+}:= \sup \{ p(i) : i\in \mathbb{Z} \} < \alpha^{-1} \,\,\, \text{and} \,\,\, 
p(\cdot) \in LH_{\infty}(\mathbb{Z}).
\]
We say that $p(\cdot) \in LH_{\infty}(\mathbb{Z})$, if there exist positive real 
constants $p_{\infty}$ and $C_{\infty}$ such that
\[
|p(i) - p_{\infty}| \leq \frac{C_{\infty}}{\log(e + |i|)}, \,\,\,\, \text{for all} \,\, i \in \mathbb{Z}.
\]
This property is known as the log-H\"older continuity at infinity for an exponent $p(\cdot)$. We set
\[
\mathcal{P} = \{ p(\cdot) : \mathbb{Z} \to [1, \infty) : 1 < p_{-} \leq p_{+} < \infty \},
\]
and
\[
\mathcal{B} = \{ p(\cdot) \in \mathcal{P} : M \,\, \text{is bounded on} \,\, \ell^{p(\cdot)} \}.
\]
Then, $LH_{\infty} \cap \mathcal{P} \subset \mathcal{B}$. Such inclusion also immediately follows from a result due to 
A. Nekvinda; indeed, in \cite{Nekvinda} he gave a necessary and sufficient condition $\widetilde{\mathcal{P}}$ for sequences 
$p(\cdot)$ to guarantee an existence of a real number $r \geq 1$ such that norms in $\ell^{p(\cdot)}$ and in $\ell^r$ are equivalent. 
By \cite[Corollary 4.1.9]{Diening}, we have that if $p(\cdot) \in LH_{\infty} \cap \mathcal{P}$, then 
$\ell^{p(\cdot)} \equiv \ell^{p_\infty}$ with equivalent norms (i.e.: $LH_{\infty} \cap \mathcal{P} \subset \widetilde{\mathcal{P}}$). 
From this equivalence and \cite[Theorem 2.3 and Proposition 2.4]{Rocha}, we immediately obtain that $M_{\alpha}$ is a bounded 
operator $\ell^{p(\cdot)} \to \ell^{q(\cdot)}$, where $\frac{1}{q(\cdot)} = \frac{1}{p(\cdot)} - \alpha$ and $0 \leq \alpha < 1$, when 
$p(\cdot) \in LH_{\infty} \cap \mathcal{P}$ and $p_{+} < \alpha^{-1}$. In particular, we have that $LH_{\infty} \cap \mathcal{P} \subset \mathcal{B}$. 

On the other hand, by \cite[Theorem 5.10]{Nekvinda2}, one has that $\mathcal{B} \setminus \widetilde{\mathcal{P}} \neq \emptyset$.

Given an exponent sequence $p(\cdot) : \mathbb{Z} \to [1, \infty)$, its conjugate exponent $p'(\cdot)$ is defined by 
$\frac{1}{p(i)} + \frac{1}{p'(i)} = 1$, for all $i \in \mathbb{Z}$. 

We point out that in our main results, we will only assume that the exponent $(p(\cdot)/r)' \in \mathcal{B}$ for some $1 < r < p_{-}$.

\begin{remark} \label{lps}
One can see that $\ell^{p(\cdot)} = \{ a : \sum_{i \in \mathbb{Z}} |a(i)|^{p(i)} < \infty \}$ and 
$\ell^{p(\cdot)} \subset \ell^{p_{+}}$, when $p_{+} < \infty$.
\end{remark}

Next, we introduce two classical discrete operators. Given a sequence $a = \{ a(i) \}_{i \in \mathbb{Z}}$, define the discrete Hilbert transform by
\begin{equation} \label{Hilbert op}
(Ha)(j) = \sum_{i \neq j} \frac{a(i)}{i-j}, \,\,\,\, j \in \mathbb{Z}.
\end{equation}
In \cite{Riesz}, M. Riesz proved that $H$ is a bounded operator $\ell^r \to \ell^r$ for $1 < r < \infty$. Thus, by Remark \ref{lps}, 
the operator $H$ is well defined on $\ell^{p(\cdot)}$ for every $p(\cdot) \in \mathcal{P}$. 

For $0 < \alpha < 1$, the discrete Riesz potential is defined by
\begin{equation} \label{Riesz op}
(I_{\alpha}a)(j) = \sum_{i \neq j} \frac{a(i)}{|i-j|^{1-\alpha}}, \,\,\,\, j \in \mathbb{Z}.
\end{equation}
Since $I_{\alpha}$ is a bounded operator $\ell^r \to \ell^s$ for $1 < r < \alpha^{-1}$ and $\frac{1}{s} = \frac{1}{r} - \alpha$ 
(see \cite{Hardy}, p. 288), by Remark \ref{lps}, it follows that the operator $I_{\alpha}$ is well defined on $\ell^{p(\cdot)}$ for every 
$p(\cdot) \in \mathcal{P}$, with $p_{+} < \alpha^{-1}$.

The purpose of this note, which was inspired by the work \cite{Uribe}, is to prove the following three results.

\begin{theorem} \label{thm 1}
Let $H$ be the discrete Hilbert transform given by (\ref{Hilbert op}) and $1 < r < p_{-}$. If $(p(\cdot)/r)' \in \mathcal{B}$, then there exists a positive constant $C$ such that
\[
\| Ha \|_{\ell^{p(\cdot)}} \leq C \| a \|_{\ell^{p(\cdot)}},
\]
for all $a \in \ell^{p(\cdot)}$.
\end{theorem}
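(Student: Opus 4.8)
The plan is to run a Rubio de Francia extrapolation argument adapted to the integer lattice, taking as the ``hard'' endpoint the classical weighted $\ell^r$ inequality for $H$ with discrete Muckenhoupt weights. The starting input I would invoke is the discrete weighted norm inequality: there is a constant $C = C(r)$ such that
\[
\sum_{j \in \mathbb{Z}} |(Ha)(j)|^r\, w(j) \leq C\, [w]_{A_r}^{\,\gamma} \sum_{j \in \mathbb{Z}} |a(j)|^r\, w(j)
\]
for every discrete $A_r$ weight $w$, where $\gamma$ depends only on $r$. This is the weighted form of M. Riesz's theorem on the lattice and is where the boundedness of $H$ genuinely enters; the rest of the argument is a transference of the estimate to $\ell^{p(\cdot)}$.

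Since $1 < r < p_{-}$, the exponent $s(\cdot) := p(\cdot)/r$ satisfies $s_{-} = p_{-}/r > 1$, so the associate-space duality for variable sequence spaces yields
\[
\|Ha\|_{\ell^{p(\cdot)}}^{\,r} = \left\| |Ha|^r \right\|_{\ell^{s(\cdot)}} \leq C \sup \sum_{j \in \mathbb{Z}} |(Ha)(j)|^r\, h(j),
\]
where the supremum runs over all nonnegative $h$ with $\|h\|_{\ell^{s'(\cdot)}} \leq 1$ and $s'(\cdot) = (p(\cdot)/r)'$. By hypothesis $s'(\cdot) \in \mathcal{B}$, so $M$ is bounded on $\ell^{s'(\cdot)}$; this is exactly the assumption that makes the Rubio de Francia operator available.

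Next I would define $\mathcal{R}h = \sum_{k \geq 0} (2\|M\|)^{-k}\, M^k h$, where $M^k$ is the $k$-fold iterate of $M$, $M^0 h = h$, and $\|M\|$ denotes the (finite) operator norm of $M$ on $\ell^{s'(\cdot)}$. The three properties to verify are: (i) $h \leq \mathcal{R}h$ pointwise; (ii) $\|\mathcal{R}h\|_{\ell^{s'(\cdot)}} \leq 2\|h\|_{\ell^{s'(\cdot)}}$, from summing the geometric series and the bound for $M$; and (iii) $\mathcal{R}h$ is a discrete $A_1$ weight with $[\mathcal{R}h]_{A_1} \leq 2\|M\|$, obtained from the sublinearity of $M$, hence an $A_r$ weight with $A_r$-constant bounded independently of $h$. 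Applying the weighted inequality with $w = \mathcal{R}h$, using (i) on the left and Hölder's inequality for variable exponents together with (ii) on the right, gives
\[
\sum_{j} |(Ha)(j)|^r h(j) \leq \sum_{j} |(Ha)(j)|^r (\mathcal{R}h)(j) \leq C \sum_{j} |a(j)|^r (\mathcal{R}h)(j) \leq C \|a\|_{\ell^{p(\cdot)}}^{\,r},
\]
and taking the supremum over $h$ followed by the $r$-th root completes the proof.

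The main obstacle I anticipate is not the extrapolation scheme, which is by now standard, but rather stating the discrete weighted estimate for $H$ with the correct dependence on $[w]_{A_r}$ and checking that the $A_1/A_r$ machinery, in particular the self-improving bound $[\mathcal{R}h]_{A_r} \leq [\mathcal{R}h]_{A_1}$ and the reverse Hölder inequality behind it, transfers verbatim to $\mathbb{Z}$. I would also confirm the associate-norm duality with a uniform constant for variable sequence spaces when $s_{-} > 1$, since the whole argument hinges on reducing $\ell^{p(\cdot)}$ boundedness to a supremum over the dual unit ball.
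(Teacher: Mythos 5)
Your proposal is correct and follows essentially the same route as the paper: the Hunt--Muckenhoupt--Wheeden weighted $\ell^r$ inequality for $H$, the duality reduction via $\| |Ha|^r \|_{\ell^{p(\cdot)/r}}$, and the Rubio de Francia iteration built from the boundedness of $M$ on $\ell^{(p(\cdot)/r)'}$, with the same three properties of $\mathcal{R}$. The only cosmetic difference is that you track the dependence on $[w]_{A_r}$ explicitly (and note in passing that the inclusion $\mathcal{A}_1 \subset \mathcal{A}_r$ needs no reverse H\"older argument, just the definition), which the paper leaves implicit.
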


\begin{theorem} \label{thm 2}
Let $I_{\alpha}$ be the discrete Riesz potential given by (\ref{Riesz op}) and $\frac{1}{1-\alpha} < s < q_{-}$. If  
$(q(\cdot)/s)' \in \mathcal{B}$ and $\frac{1}{p(\cdot)} = \frac{1}{q(\cdot)} + \alpha$, then there exists a positive constant $C$ such that
\[
\| I_{\alpha}a \|_{\ell^{q(\cdot)}} \leq C \| a \|_{\ell^{p(\cdot)}},
\]
for all $a \in \ell^{p(\cdot)}$.
\end{theorem}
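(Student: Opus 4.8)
The plan is to run an off-diagonal version of the Rubio de Francia extrapolation argument used for Theorem~\ref{thm 1}, carefully tracking the shift of exponents produced by $I_{\alpha}$. Throughout, set $\rho(\cdot) = q(\cdot)/s$; since $s < q_{-}$ we have $\rho_{-} > 1$, so $\rho'(\cdot) = (q(\cdot)/s)'$ is a genuine exponent in $\mathcal{P}$, and by hypothesis $\rho'(\cdot) \in \mathcal{B}$, i.e.\ $M$ is bounded on $\ell^{\rho'(\cdot)}$. I also introduce the auxiliary \emph{constant} exponent $\sigma$ defined by $\frac{1}{\sigma} = \frac{1}{s} + \alpha$. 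The assumption $\frac{1}{1-\alpha} < s$ guarantees $\sigma > 1$, while $s < q_{-}$ together with $\frac{1}{p(\cdot)} = \frac{1}{q(\cdot)} + \alpha$ forces $\sigma < p_{-}$, so that $p(\cdot)/\sigma$ has lower bound $>1$ as well.

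First I would use the norm conjugate (associate space) description of the variable norm together with the power-scaling identity $\big\| |I_{\alpha}a|^{s} \big\|_{\ell^{\rho(\cdot)}} = \| I_{\alpha}a \|_{\ell^{q(\cdot)}}^{s}$. This reduces the theorem to estimating
\[
\sum_{j \in \mathbb{Z}} |I_{\alpha}a(j)|^{s}\, h(j)
\]
uniformly over all nonnegative $h$ with $\|h\|_{\ell^{\rho'(\cdot)}} \leq 1$. To such an $h$ I apply the Rubio de Francia algorithm relative to $M$ on $\ell^{\rho'(\cdot)}$, forming
\[
\mathcal{R}h = \sum_{k=0}^{\infty} \frac{M^{k}h}{2^{k}\, \|M\|_{\ell^{\rho'(\cdot)}}^{k}},
\]
which satisfies (i) $h \leq \mathcal{R}h$ pointwise, (ii) $\|\mathcal{R}h\|_{\ell^{\rho'(\cdot)}} \leq 2\|h\|_{\ell^{\rho'(\cdot)}}$, and (iii) $\mathcal{R}h$ is a discrete $A_{1}$ weight with constant controlled by $\|M\|_{\ell^{\rho'(\cdot)}}$.

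The crucial analytic input is the discrete \emph{off-diagonal} weighted norm inequality for $I_{\alpha}$: for a weight $w \in A_{1}$ and the pair $(\sigma, s)$ with $\frac{1}{\sigma} - \frac{1}{s} = \alpha$,
\[
\Big( \sum_{j} |I_{\alpha}a(j)|^{s}\, w(j) \Big)^{1/s} \leq C \Big( \sum_{j} |a(j)|^{\sigma}\, w(j)^{\sigma/s} \Big)^{1/\sigma},
\]
which follows from the discrete Muckenhoupt--Wheeden theory via the implication $w \in A_{1} \Rightarrow w^{1/s} \in A_{\sigma,s}$. Taking $w = \mathcal{R}h$ and then applying Hölder's inequality in variable spaces together with the power-scaling of norms gives
\[
\sum_{j} |a(j)|^{\sigma}\, \mathcal{R}h(j)^{\sigma/s} \leq C\, \big\| |a|^{\sigma} \big\|_{\ell^{p(\cdot)/\sigma}}\, \big\| \mathcal{R}h^{\sigma/s} \big\|_{\ell^{(p(\cdot)/\sigma)'}} = C\, \|a\|_{\ell^{p(\cdot)}}^{\sigma}\, \|\mathcal{R}h\|_{\ell^{(\sigma/s)(p(\cdot)/\sigma)'}}^{\sigma/s}.
\]
The bookkeeping identity that makes the hypotheses on $s$ exactly the right ones is that $\frac{1}{\sigma} = \frac{1}{s}+\alpha$ and $\frac{1}{p(\cdot)} = \frac{1}{q(\cdot)}+\alpha$ together yield $(\sigma/s)(p(\cdot)/\sigma)' = (q(\cdot)/s)' = \rho'(\cdot)$ pointwise; hence the last factor equals $\|\mathcal{R}h\|_{\ell^{\rho'(\cdot)}}^{\sigma/s} \leq 2^{\sigma/s}$ by property (ii). Combining the displays and using $h \leq \mathcal{R}h$ gives $\sum_{j} |I_{\alpha}a(j)|^{s} h(j) \leq C \|a\|_{\ell^{p(\cdot)}}^{s}$; taking the supremum over $h$ and extracting the $s$-th root completes the argument.

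The main obstacle I expect is not the assembly above but the supporting \emph{discrete weighted norm inequality} for $I_{\alpha}$ with $A_{\sigma,s}$ weights, together with the discrete implication $A_{1} \subset \{\,w : w^{1/s} \in A_{\sigma,s}\,\}$ and the precise quantitative control of the $A_{1}$ constant of $\mathcal{R}h$. These are presumably the discrete weighted estimates developed earlier in the paper; if not, I would transfer the classical continuous Muckenhoupt--Wheeden estimate to $\mathbb{Z}$ and verify the $A_{1} \Rightarrow A_{\sigma,s}$ step directly from the definitions of the discrete Muckenhoupt classes. Everything else (duality in variable $\ell^{p(\cdot)}$, variable Hölder, power-scaling of norms, and the Rubio de Francia construction) runs in parallel with the proof of Theorem~\ref{thm 1}.
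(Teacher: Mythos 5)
Your proposal is correct and follows essentially the same route as the paper's proof: duality via the associate norm of $\ell^{q(\cdot)/s}$, the Rubio de Francia algorithm built from $M$ on $\ell^{(q(\cdot)/s)'}$, the discrete off-diagonal weighted inequality for $I_{\alpha}$ with $A_1$ weights (the paper imports this from Hao--Li--Yang together with the observation $w\in\mathcal{A}_1\Rightarrow w^{1/s}\in\mathcal{A}_{r,s}$), and the same exponent bookkeeping $(\sigma/s)(p(\cdot)/\sigma)'=(q(\cdot)/s)'$ with your $\sigma$ playing the role of the paper's $r$. No gaps.
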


\begin{theorem} \label{thm 3}
Given $0 \leq \alpha <1$, let $M_{\alpha}$ be the discrete fractional maximal given by (\ref{Fract max}) and 
$\frac{1}{1-\alpha} < s < q_{-}$. If $(q(\cdot)/s)' \in \mathcal{B}$, $\frac{1}{p(\cdot)} = \frac{1}{q(\cdot)} + \alpha$ and 
$\theta \in (1, \infty)$, then there exists a positive constant $C$ such that
\begin{equation} \label{fract max ineq}
\left\| \left\{ \sum_{k=1}^{\infty} (M_{\alpha}a_k)^{\theta} \right\}^{1/\theta} \right\|_{\ell^{q(\cdot)}} \leq C 
\left\| \left\{ \sum_{k=1}^{\infty} |a_k|^{\theta} \right\}^{1/\theta} \right\|_{\ell^{p(\cdot)}},
\end{equation}
for all sequences of functions $\{ a_k : \mathbb{Z} \to \mathbb{C} : k \in \mathbb{N}  \}$ with finite support.
\end{theorem}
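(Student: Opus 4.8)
The plan is to run an off-diagonal version of the Rubio de Francia extrapolation argument, reducing the vector-valued variable-exponent estimate to a single weighted inequality for $M_\alpha$. Throughout, write
\[
F = \Big(\sum_{k=1}^{\infty} |a_k|^\theta\Big)^{1/\theta}, \qquad G = \Big(\sum_{k=1}^{\infty} (M_\alpha a_k)^\theta\Big)^{1/\theta},
\]
and let $t$ be defined by $\frac{1}{t} = \frac{1}{s} + \alpha$. Since $s > \frac{1}{1-\alpha}$ one checks that $1 < t \leq s$ (with $t=s$ when $\alpha = 0$), and that $t < p_{-}$, so that both $q(\cdot)/s$ and $p(\cdot)/t$ belong to $\mathcal{P}$. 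The relations $\frac{1}{p(\cdot)} = \frac{1}{q(\cdot)} + \alpha$ and $\frac{1}{t} = \frac{1}{s} + \alpha$ will be used repeatedly. First I would use the hypothesis $s < q_{-}$, which gives $q(\cdot)/s \in \mathcal{P}$, so that $\ell^{q(\cdot)/s}$ is a Banach function space whose associate space is $\ell^{(q(\cdot)/s)'}$. Hence, by the generalized Hölder inequality and the associate-space duality for variable exponent sequence spaces,
\[
\|G\|_{\ell^{q(\cdot)}}^{s} = \|G^{s}\|_{\ell^{q(\cdot)/s}} \leq C \sup \Big\{ \sum_{j \in \mathbb{Z}} G(j)^{s}\, h(j) \ : \ h \geq 0, \ \|h\|_{\ell^{(q(\cdot)/s)'}} \leq 1 \Big\}.
\]

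Fix such an $h$. Since $(q(\cdot)/s)' \in \mathcal{B}$, the maximal operator $M$ is bounded on $\ell^{(q(\cdot)/s)'}$, and I would apply the Rubio de Francia algorithm on this space, setting
\[
w = \sum_{m=0}^{\infty} \frac{M^{m}h}{2^{m}\, \|M\|^{m}},
\]
where $\|M\|$ is the operator norm of $M$ on $\ell^{(q(\cdot)/s)'}$ and $M^{m}$ denotes the $m$-fold iterate. Then $h \leq w$ pointwise, $\|w\|_{\ell^{(q(\cdot)/s)'}} \leq 2 \|h\|_{\ell^{(q(\cdot)/s)'}} \leq 2$, and $w$ is a discrete $A_1$ weight with $[w]_{A_1} \leq 2\|M\|$.

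The core step is the discrete weighted vector-valued inequality for $M_\alpha$: for every weight $W$ in the discrete class $A_{t,s}$,
\[
\Big\| \Big(\sum_{k=1}^{\infty} (M_\alpha a_k)^\theta\Big)^{1/\theta} \Big\|_{\ell^{s}(W^{s})} \leq C \Big\| \Big(\sum_{k=1}^{\infty} |a_k|^\theta\Big)^{1/\theta} \Big\|_{\ell^{t}(W^{t})},
\]
with $C$ depending only on $[W]_{A_{t,s}}$, $s$, $t$ and $\theta$. I would apply this with $W = w^{1/s}$, which lies in $A_{t,s}$ with $[W]_{A_{t,s}} \leq [w]_{A_1}^{1/s}$ precisely because $w \in A_1$; using $h \leq w$ this gives
\[
\sum_{j} G(j)^{s}\, h(j) \leq \sum_{j} G(j)^{s}\, w(j) \leq C \Big( \sum_{j} F(j)^{t}\, w(j)^{t/s} \Big)^{s/t}.
\]
To finish, I would apply the generalized Hölder inequality in $\ell^{p(\cdot)/t}$ together with the pointwise identity of exponents $\frac{t}{s}(p(\cdot)/t)' = (q(\cdot)/s)'$, which follows from $\frac{1}{p(\cdot)} = \frac{1}{q(\cdot)} + \alpha$ and $\frac{1}{t} = \frac{1}{s} + \alpha$, to obtain
\[
\sum_{j} F(j)^{t}\, w(j)^{t/s} \leq C\, \|F^{t}\|_{\ell^{p(\cdot)/t}}\, \|w^{t/s}\|_{\ell^{(p(\cdot)/t)'}} = C\, \|F\|_{\ell^{p(\cdot)}}^{t}\, \|w\|_{\ell^{(q(\cdot)/s)'}}^{t/s} \leq C\, \|F\|_{\ell^{p(\cdot)}}^{t}.
\]
Combining the last three displays and taking the supremum over $h$ yields $\|G\|_{\ell^{q(\cdot)}}^{s} \leq C\, \|F\|_{\ell^{p(\cdot)}}^{s}$, which is exactly (\ref{fract max ineq}).

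The main obstacle I expect is the core step, namely the discrete weighted vector-valued fractional maximal inequality for $A_{t,s}$ weights. In the scalar case it is the discrete Muckenhoupt–Wheeden bound $M_\alpha : \ell^{t}(W^{t}) \to \ell^{s}(W^{s})$, and the $\ell^\theta$-valued extension is a Fefferman–Stein type estimate; I would obtain it either from the scalar weighted bound by a further Rubio de Francia argument at the level of weights, or by adapting the classical continuous proof to $\mathbb{Z}$. A secondary technical matter is to verify carefully the chain of associate-space duality identities for the variable spaces $\ell^{p(\cdot)}(\mathbb{Z})$ used in the first and last displays, the homogeneity rule $\||w|^{\beta}\|_{\ell^{r(\cdot)}} = \|w\|_{\ell^{\beta r(\cdot)}}^{\beta}$, and the implication $w \in A_1 \Rightarrow w^{1/s} \in A_{t,s}$ in the discrete setting.
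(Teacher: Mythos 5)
Your proposal is correct and follows essentially the same route as the paper: dualize $\|G^{s}\|_{\ell^{q(\cdot)/s}}$ against $\ell^{(q(\cdot)/s)'}$, run the Rubio de Francia algorithm there to produce an $\mathcal{A}_1$ majorant, invoke the discrete weighted vector-valued Fefferman--Stein inequality for $M_\alpha$ via $w\in\mathcal{A}_1\Rightarrow w^{1/s}\in\mathcal{A}_{t,s}$, and close with H\"older and the exponent identity $\tfrac{t}{s}(p(\cdot)/t)'=(q(\cdot)/s)'$. The ``core step'' you flag as the main obstacle is exactly what the paper imports from the scalar discrete weighted bound of Hao--Li--Yang combined with classical extrapolation to get the $\ell^{\theta}$-valued version, which is one of the two derivations you propose.
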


This paper is organized as follows. Section 2 presents some facts about the $\ell^{p(\cdot)}$ spaces and it also introduces the discrete weights. In Section 3, we prove our main results by applying discrete weighted norm inequalities and the Rubio de Francia algorithm.

\

{\bf Notation.} Given a sequence $a = \{ a(i) \}$ and a real number $s > 0$, we put $|a|^s = \{ |a(i)|^s \}$. The symbol $A \approx B$ stands for the inequality $c_1 B \leq A \leq c_2 B$, where $c_1$ and $c_2$ are positive constants. Throughout this paper, $C$ will denote a positive real constant not necessarily the same at each occurrence.

\section{Preliminaries}

In this section, we give some basic results about the variable $\ell^{p(\cdot)}$ spaces and also introduce the discrete Muckenhoupt classes.

\begin{lemma} \label{potencia r}
Let $p(\cdot) \in \mathcal{P}$, then for every $r \in (0, p_{-})$
\[
\| a \|_{\ell^{p(\cdot)}}^{r} = \| |a|^{r} \|_{\ell^{p(\cdot)/r}}.
\]
\end{lemma}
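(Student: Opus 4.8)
The plan is to reduce the identity to a pointwise rescaling of the modular. Write $\rho_{p(\cdot)}(a) = \sum_{i \in \mathbb{Z}} |a(i)|^{p(i)}$, so that by definition $\|a\|_{\ell^{p(\cdot)}} = \inf\{\lambda > 0 : \rho_{p(\cdot)}(a/\lambda) \le 1\}$ and likewise $\| |a|^r \|_{\ell^{p(\cdot)/r}} = \inf\{\mu > 0 : \rho_{p(\cdot)/r}(|a|^r/\mu) \le 1\}$. First I would observe that $r \in (0,p_-)$ guarantees $p(i)/r > 1$ for every $i$, so that $p(\cdot)/r \in \mathcal{P}$ and the space $\ell^{p(\cdot)/r}$ is well defined; this is exactly what the hypothesis $r < p_-$ buys us.

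The heart of the matter is the elementary pointwise identity $\left(|a(i)|^r/\mu\right)^{p(i)/r} = \left(|a(i)|/\mu^{1/r}\right)^{p(i)}$, valid for every $i$ and every $\mu > 0$. Summing over $i$ gives $\rho_{p(\cdot)/r}(|a|^r/\mu) = \rho_{p(\cdot)}(a/\mu^{1/r})$. Consequently, after the substitution $\lambda = \mu^{1/r}$ the defining set $\{\mu > 0 : \rho_{p(\cdot)/r}(|a|^r/\mu) \le 1\}$ is precisely $\{\lambda^r : \lambda > 0, \ \rho_{p(\cdot)}(a/\lambda) \le 1\}$, i.e. the image under $t \mapsto t^r$ of the set defining $\|a\|_{\ell^{p(\cdot)}}$.

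It then remains to take infima. I would note that $\rho_{p(\cdot)}(a/\lambda)$ is nonincreasing in $\lambda$, so the set $S = \{\lambda > 0 : \rho_{p(\cdot)}(a/\lambda) \le 1\}$ is an upper ray with $\inf S = \|a\|_{\ell^{p(\cdot)}}$. Since $t \mapsto t^r$ is a continuous strictly increasing bijection of $(0,\infty)$, it commutes with the infimum of such a set, giving $\inf\{\lambda^r : \lambda \in S\} = (\inf S)^r$, which is the claimed equality $\| |a|^r \|_{\ell^{p(\cdot)/r}} = \|a\|_{\ell^{p(\cdot)}}^r$.

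The only genuinely delicate point is this last interchange of the infimum with the power $r$; I would justify it by the monotonicity just described together with the continuity of $t \mapsto t^r$ at the infimum (handling separately the trivial case $a = 0$, where both sides vanish). Everything else is a direct computation, so I do not expect any real obstacle beyond bookkeeping.
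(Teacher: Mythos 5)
Your proof is correct and is precisely the computation the paper summarizes with ``it follows from the definition of the $\ell^{p(\cdot)}$-norm'': the pointwise identity $\bigl(|a(i)|^r/\mu\bigr)^{p(i)/r}=\bigl(|a(i)|/\mu^{1/r}\bigr)^{p(i)}$, the substitution $\lambda=\mu^{1/r}$, and the compatibility of the increasing map $t\mapsto t^r$ with the infimum over the upper ray $S$. No issues; your care with the infimum step (and the degenerate case $a=0$, to which one could add the case where $S$ is empty and both sides are infinite) is sound.
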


\begin{proof} It follows from the definition of the $\ell^{p(\cdot)}$-norm.
\end{proof}

\begin{proposition} (H\"older's inequality) \label{Holder ineq}
Let $p(\cdot) \in \mathcal{P}$, then there exists a constant $C > 0$ such that
\[
\sum_{i \in \mathbb{Z}} |a(i) b(i)| dx \leq C \| a \|_{\ell^{p(\cdot)}} \| b \|_{\ell^{p'(\cdot)}}.
\]
\end{proposition}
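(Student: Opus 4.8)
The plan is to reduce to normalized sequences and then apply the numerical Young inequality termwise. First I would dispose of the trivial case: if $\| a \|_{\ell^{p(\cdot)}} = 0$ or $\| b \|_{\ell^{p'(\cdot)}} = 0$, then $a$ or $b$ is the zero sequence and the asserted inequality holds. Otherwise, set $\tilde a = a / \| a \|_{\ell^{p(\cdot)}}$ and $\tilde b = b / \| b \|_{\ell^{p'(\cdot)}}$, so that $\| \tilde a \|_{\ell^{p(\cdot)}} = \| \tilde b \|_{\ell^{p'(\cdot)}} = 1$. Since both sides of the claimed estimate are positively homogeneous of degree one in $a$ and in $b$, it suffices to bound $\sum_{i \in \mathbb{Z}} |\tilde a(i)\, \tilde b(i)|$ by an absolute constant.

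The device that makes this work is the unit ball property: whenever $\| c \|_{\ell^{p(\cdot)}} \le 1$, one has $\sum_{i \in \mathbb{Z}} |c(i)|^{p(i)} \le 1$. Granting this for $\tilde a$ (with exponent $p(\cdot)$) and for $\tilde b$ (with exponent $p'(\cdot)$), I would invoke the elementary Young inequality $xy \le \frac{x^{t}}{t} + \frac{y^{t'}}{t'}$, valid for $x, y \ge 0$ and conjugate exponents $t, t' > 1$, applied at each index with $t = p(i)$, $x = |\tilde a(i)|$, $y = |\tilde b(i)|$. Because $1/p(i) \le 1$ and $1/p'(i) \le 1$, this yields
\[
|\tilde a(i)\, \tilde b(i)| \le \frac{1}{p(i)} |\tilde a(i)|^{p(i)} + \frac{1}{p'(i)} |\tilde b(i)|^{p'(i)} \le |\tilde a(i)|^{p(i)} + |\tilde b(i)|^{p'(i)}.
\]
Summing over $i \in \mathbb{Z}$ and applying the unit ball property to each of the two resulting series gives $\sum_{i} |\tilde a(i)\, \tilde b(i)| \le 1 + 1 = 2$. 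Undoing the normalization then produces the proposition with the explicit constant $C = 2$.

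The only step that genuinely needs justification — and the one I would treat as the main (if mild) obstacle — is the unit ball property itself, i.e.\ that the infimum defining the Luxemburg norm is attained. Fixing $c$ with $\| c \|_{\ell^{p(\cdot)}} = \lambda_0 \le 1$, I would note that $\lambda \mapsto \sum_{i} |c(i)/\lambda|^{p(i)}$ is nonincreasing in $\lambda$ and, by the monotone convergence theorem applied along any sequence $\lambda_n \downarrow \lambda_0$ (where each term satisfies $\sum_{i} |c(i)/\lambda_n|^{p(i)} \le 1$ since $\lambda_n$ exceeds the infimum), that $\sum_{i} |c(i)/\lambda_0|^{p(i)} \le 1$. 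Finally, since $\lambda_0 \le 1$ forces $|c(i)| \le |c(i)|/\lambda_0$ pointwise, monotonicity of $x \mapsto x^{p(i)}$ gives $\sum_{i} |c(i)|^{p(i)} \le \sum_{i} |c(i)/\lambda_0|^{p(i)} \le 1$. The finiteness $p_{+} < \infty$ together with the summability recorded in Remark \ref{lps} ensures this modular is well behaved, so no convergence issues arise.
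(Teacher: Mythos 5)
Your argument is correct, but it takes a different route from the paper: the paper proves this proposition by a one-line citation to \cite[Lemma 3.2.20]{Diening}, specializing the general Musielak--Orlicz/variable-exponent H\"older inequality to $A = \mathbb{Z}$ with the counting measure, whereas you give a self-contained elementary proof. Your normalization-plus-Young argument is exactly the standard mechanism behind the cited lemma, and every step checks out: the homogeneity reduction is legitimate, Young's inequality applies termwise because $p(\cdot) \in \mathcal{P}$ forces $1 < p_- \le p(i) \le p_+ < \infty$ (so each $p(i)$ and $p'(i)$ is a genuine pair of finite conjugate exponents $>1$), and your monotone-convergence justification of the unit ball property (that $\| c \|_{\ell^{p(\cdot)}} \le 1$ implies $\sum_i |c(i)|^{p(i)} \le 1$) is sound --- though I would phrase it as left-continuity of the modular in $\lambda$ rather than ``the infimum is attained,'' since what you actually establish is that the modular at $\lambda_0$ is at most $1$, not that $\lambda_0$ is a minimizer in any stronger sense. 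What your approach buys is an explicit constant $C = 2$ and independence from the external reference; what the paper's approach buys is brevity and coverage of the degenerate endpoints $p(i) = 1$ or $p(i) = \infty$ (irrelevant here, since the proposition assumes $p(\cdot) \in \mathcal{P}$). Either proof is acceptable.
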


\begin{proof} The proposition follows from \cite[Lemma 3.2.20]{Diening} considering there $A = \mathbb{Z}$ and $\mu$ being the counting measure.
\end{proof}

\begin{proposition} \label{norma equivalente}
Let $p(\cdot) \in \mathcal{P}$, then
\[
\| a \|_{\ell^{p(\cdot)}} \approx \sup \left\{ \sum_{i \in \mathbb{Z}} |a(i) b(i)| : \| b \|_{\ell^{p'(\cdot)}} \leq 1
\right\}.
\]
\end{proposition}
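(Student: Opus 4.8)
The plan is to establish the two one-sided inequalities hidden in the symbol $\approx$ separately, obtaining the upper one from Hölder's inequality and the lower one from an explicit near-optimal test sequence. For the upper estimate, let $b$ be any sequence with $\|b\|_{\ell^{p'(\cdot)}} \leq 1$. Hölder's inequality (Proposition \ref{Holder ineq}) gives $\sum_{i \in \mathbb{Z}} |a(i) b(i)| \leq C \|a\|_{\ell^{p(\cdot)}} \|b\|_{\ell^{p'(\cdot)}} \leq C \|a\|_{\ell^{p(\cdot)}}$, and taking the supremum over all such $b$ yields $\sup\{ \sum_{i} |a(i) b(i)| : \|b\|_{\ell^{p'(\cdot)}} \leq 1 \} \leq C \|a\|_{\ell^{p(\cdot)}}$ with the same constant.

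For the reverse estimate I would produce a single admissible $b$ realizing the norm up to a constant. Since both sides are positively homogeneous of degree one in $a$ and the inequality is trivial when $a = 0$, I may normalize to $\|a\|_{\ell^{p(\cdot)}} = 1$. Because $p_{+} < \infty$, the norm-modular relation forces $\rho_{p(\cdot)}(a) := \sum_{i \in \mathbb{Z}} |a(i)|^{p(i)} = 1$. I would then choose $b(i) = |a(i)|^{p(i)-1}$ and invoke the pointwise identity $(p(i)-1) p'(i) = p(i)$, which follows from $\frac{1}{p(i)} + \frac{1}{p'(i)} = 1$, to compute the conjugate modular $\rho_{p'(\cdot)}(b) = \sum_{i} |a(i)|^{(p(i)-1)p'(i)} = \sum_{i} |a(i)|^{p(i)} = 1$. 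Since $p(\cdot) \in \mathcal{P}$ forces $p'(\cdot) \in \mathcal{P}$ (in particular $(p')_{+} = (p_{-})' < \infty$ because $p_{-} > 1$), the same norm-modular relation applied to $p'(\cdot)$ gives $\|b\|_{\ell^{p'(\cdot)}} = 1$, so $b$ is admissible. Finally $\sum_{i} |a(i) b(i)| = \sum_{i} |a(i)|^{p(i)} = 1 = \|a\|_{\ell^{p(\cdot)}}$, whence the supremum is at least $\|a\|_{\ell^{p(\cdot)}}$.

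Combining the two bounds gives $\|a\|_{\ell^{p(\cdot)}} \leq \sup\{ \sum_{i} |a(i) b(i)| : \|b\|_{\ell^{p'(\cdot)}} \leq 1 \} \leq C \|a\|_{\ell^{p(\cdot)}}$, which is the claimed equivalence. The step I would be most careful about, and the only nonroutine ingredient, is the passage from the normalized norm to the exact value of the modular: the implication $\|a\|_{\ell^{p(\cdot)}} = 1 \Rightarrow \rho_{p(\cdot)}(a) = 1$ needs $p_{+} < \infty$, and the corresponding statement for $p'(\cdot)$ needs $p_{-} > 1$. Both requirements are built into the hypothesis $p(\cdot) \in \mathcal{P}$, so no extra assumption is necessary. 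As an alternative one could simply cite the norm conjugate formula from \cite{Diening} with $A = \mathbb{Z}$ and $\mu$ the counting measure, but the explicit test sequence above keeps the argument self-contained and in fact produces the sharp lower constant $1$.
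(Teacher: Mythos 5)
Your proof is correct, but it takes a genuinely different route from the paper: the paper disposes of this proposition with a one-line citation to the norm conjugate formula \cite[Corollary 3.2.14]{Diening} (applied with $A=\mathbb{Z}$ and the counting measure), whereas you give a self-contained argument. Your upper bound via Proposition \ref{Holder ineq} matches what the cited machinery would give, and your lower bound via the explicit test sequence $b(i)=|a(i)|^{p(i)-1}$ is sound: the normalization $\|a\|_{\ell^{p(\cdot)}}=1\Rightarrow\rho_{p(\cdot)}(a)=1$ does indeed require $p_{+}<\infty$, which $\mathcal{P}$ supplies, and the identity $(p(i)-1)p'(i)=p(i)$ then gives $\rho_{p'(\cdot)}(b)=1$. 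One small simplification you could note: for admissibility of $b$ you only need $\|b\|_{\ell^{p'(\cdot)}}\leq 1$, and the implication $\rho_{p'(\cdot)}(b)\leq 1\Rightarrow\|b\|_{\ell^{p'(\cdot)}}\leq 1$ holds directly from the infimum definition of the norm (take $\lambda=1$), so the discussion of $(p')_{+}<\infty$ is not actually needed for that step. What the citation buys is generality (the associate-norm formula in \cite{Diening} holds without the restrictions $p_{-}>1$, $p_{+}<\infty$) and brevity; what your argument buys is self-containedness and the sharp lower constant $1$, at the cost of being tied to the class $\mathcal{P}$. Implicitly you assume $\|a\|_{\ell^{p(\cdot)}}<\infty$ when normalizing; the case $a\notin\ell^{p(\cdot)}$ is handled by truncating $a$ to finite sets and passing to the limit, but this is a routine remark rather than a gap.
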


\begin{proof} The proposition follows from \cite[Corollary 3.2.14]{Diening}.
\end{proof}

A discrete weight is a positive real sequence. For a constant $1 < r < \infty$, we say that a weight $w = \{ w(i) \}_{i \in \mathbb{Z}}$ belongs to $\mathcal{A}_r$ if there exists a positive constant $C$ such that
\[
\left(\sum_{i=m}^{n} w(i) \right) \left(\sum_{i=m}^{n} w(i)^{-1/(r-1)} \right)^{r-1} \leq C (n-m+1)^r,
\]
for all $m, n \in \mathbb{Z}$, with $m \leq n$. For $r=1$, we say that a weight $w = \{ w(i) \}_{i \in \mathbb{Z}}$ belongs 
to $\mathcal{A}_1$ if there exists a positive constant $C$ such that
\[
\frac{1}{n-m+1} \sum_{i=m}^{n} w(i)  \leq C \inf \{ w(i) : i \in \mathbb{Z} \},
\]
for all $m, n \in \mathbb{Z}$, with $m \leq n$. Equivalently, a weight $w = \{ w(i) \}_{i \in \mathbb{Z}}$ belongs 
to $\mathcal{A}_1$ if there exists a positive constant $C$ such that
\[
(M w)(j) \leq C w(j), \,\,\,\, \text{for all} \, j \in \mathbb{Z}.
\]
Given $1 < r \leq s < \infty$, a weight $w = \{ w(i) \}_{i \in \mathbb{Z}}$ is said to belong to $\mathcal{A}_{r, s}$ if there exists a positive constant $C$ such that
\[
\left( \frac{1}{n-m+1}\sum_{i=m}^{n} w(i)^s \right)^{1/s} \left(\frac{1}{n-m+1} \sum_{i=m}^{n} w(i)^{-r'} \right)^{1/r'} \leq C,
\]
for all $m, n \in \mathbb{Z}$, with $m \leq n$.

\begin{remark} \label{Ap cond}
It is clear that $\mathcal{A}_r \subset \mathcal{A}_s$, when $1 \leq r < s < \infty$. Moreover, if $w \in \mathcal{A}_1$ then
$w^{1/s} \in  \mathcal{A}_{r,s}$ for every $1 < r \leq s < \infty$. 
\end{remark}

\section{Main results}

In this section we prove our main results by adapting the techniques used in \cite{Uribe} to our discrete setting. 

\

{\sc Proof of Theorem \ref{thm 1}.} Let $1 < r < p_{-}$ be such that $(p(\cdot)/r)' \in \mathcal{B}$. 
By \cite[Theorem 10]{Hunt} we have, for every sequence $a = \{ a(i) \}$ such that $|(Ha)(j)| < \infty$ for all $j \in \mathbb{Z}$ and every weight $w = \{ w(j) \} \in \mathcal{A}_r$, that
\begin{equation} \label{weight ineq}
\sum_{j \in \mathbb{Z}} |(Ha)(j)|^r w(j) \leq C \sum_{j \in \mathbb{Z}} |a(j)|^r w(j).
\end{equation}
In particular, since the operator $H$ is well defined on $\ell^{p(\cdot)}$, the inequality (\ref{weight ineq}) holds for 
$a \in \ell^{p(\cdot)}$. 

On the other hand, Lemma \ref{potencia r} and Proposition \ref{norma equivalente} give
\begin{equation} \label{estim Ha}
\| Ha \|_{\ell^{p(\cdot)}}^r = \| |Ha|^r \|_{\ell^{p(\cdot)/r}}  \leq C \sup \sum_{j \in \mathbb{Z}} |(Ha)(j)|^r b(j),
\end{equation}
where the supremum is taken over all non-negative sequences $b = \{ b(j) \} \in \ell^{(p(\cdot)/r)'}$ with 
$\| b \|_{\ell^{(p(\cdot)/r)'}} \leq 1$.
Since, by hypothesis, the discrete maximal $M$ is bounded on $\ell^{(p(\cdot)/r)'}$, we define the operator $\mathcal{R}$ on 
$\ell^{(p(\cdot)/r)'}$ by
\[
(\mathcal{R}b)(j) = \sum_{k=0}^{\infty} \frac{(M^k b)(j)}{2^k A},
\]
where $A = \|M \|_{\ell^{(p(\cdot)/r)'} \to \ell^{(p(\cdot)/r)'}}$, for $k \geq 1$, $M^k = M \circ \cdot \cdot \cdot \circ M$ denotes $k$ iterations of the discrete maximal, and $M^0$ is the identity operator. It is easy to check that

$(i)$ if $b(j) \geq 0$, then $b(j) \leq (\mathcal{R}b)(j)$ for all $j \in \mathbb{Z}$;

$(ii)$ $\| \mathcal{R}b \|_{\ell^{(p(\cdot)/r)'}} \leq 2 \| b \|_{\ell^{(p(\cdot)/r)'}}$;

$(iii)$ $M(\mathcal{R}b)(j) \leq 2A (\mathcal{R}b)(j)$, for all $j \in \mathbb{Z}$, so $\mathcal{R}b \in \mathcal{A}_1$. \\
Now, from $(i)$, it follows that
\[
\sum_{j \in \mathbb{Z}} |(Ha)(j)|^r b(j) \leq \sum_{j \in \mathbb{Z}} |(Ha)(j)|^r (\mathcal{R}b)(j),
\]
since $\mathcal{R}b \in \mathcal{A}_1$ (see $(iii)$), by Remark \ref{Ap cond} and (\ref{weight ineq}), we obtain
\[
\leq \sum_{j \in \mathbb{Z}} |a(j)|^r (\mathcal{R}b)(j),
\]
to apply Proposition \ref{Holder ineq} on this expression, we get 
\[
\leq C \| |a|^r \|_{\ell^{p(\cdot)/r}} \| \mathcal{R}b \|_{\ell^{(p(\cdot)/r)'}},
\]
the property $(ii)$ gives
\[
\leq C \| |a|^r \|_{\ell^{p(\cdot)/r}} \| b \|_{\ell^{(p(\cdot)/r)'}}.
\]
The above string of inequalities leads to
\begin{equation} \label{estim Ha2}
\sum_{j \in \mathbb{Z}} |(Ha)(j)|^r b(j) \leq C \| |a|^r \|_{\ell^{p(\cdot)/r}} \| b \|_{\ell^{(p(\cdot)/r)'}}.
\end{equation}
Finally, by taking the supremum over all non-negative sequences $b = \{ b(j) \} \in \ell^{(p(\cdot)/r)'}$ such that 
$\| b \|_{\ell^{(p(\cdot)/r)'}} \leq 1$ in (\ref{estim Ha2}), from (\ref{estim Ha}) and Lemma \ref{potencia r}, we conclude that
\[
\| Ha \|_{\ell^{p(\cdot)}} \leq C \| a \|_{\ell^{p(\cdot)}}.
\]
This completes the proof. \hspace{11.8cm} $\square$

\

{\sc Proof of Theorem \ref{thm 2}.} Let $\frac{1}{1-\alpha} < s < q_{-}$ be such that $(q(\cdot)/s)' \in \mathcal{B}$. We define $\frac{1}{r} = \frac{1}{s} + \alpha$, so $1 < r < p_{-}$. Since the operator $I_{\alpha}$ is well defined on $\ell^{p(\cdot)}$, by
Remark \ref{Ap cond} and \cite[Theorem 4.4]{Hao2}, the inequality
\begin{equation} \label{Ia weight}
\sum_{j \in \mathbb{Z}} |(I_{\alpha}a)(j)|^s w(j) \leq C \left( \sum_{j \in \mathbb{Z}} |a(j)|^r w(j)^{r/s} \right)^{s/r}
\end{equation}
holds for all $a =\{ a(i) \} \in \ell^{p(\cdot)}$ and all $w = \{ w(i) \} \in \mathcal{A}_1$. By Lemma \ref{potencia r} and 
Proposition \ref{norma equivalente}, we have
\begin{equation} \label{estim Ia}
\| I_{\alpha}a \|_{\ell^{q(\cdot)}}^s = \| |I_{\alpha}a|^s \|_{\ell^{q(\cdot)/s}}  \leq 
C \sup \sum_{j \in \mathbb{Z}} |(I_{\alpha}a)(j)|^s b(j),
\end{equation}
where the supremum is taken over all non-negative sequences $b = \{ b(j) \} \in \ell^{(q(\cdot)/s)'}$ with 
$\| b \|_{\ell^{(q(\cdot)/s)'}} \leq 1$. By our assumption on $q(\cdot)$, the discrete maximal $M$ is bounded on 
$\ell^{(q(\cdot)/s)'}$. Then, we define the operator $\mathcal{R}$ on $\ell^{(q(\cdot)/s)'}$ by
\[
(\mathcal{R}b)(j) = \sum_{k=0}^{\infty} \frac{(M^k b)(j)}{2^k A},
\]
where $A = \|M \|_{\ell^{(q(\cdot)/s)'} \to \ell^{(q(\cdot)/s)'}}$. We observe that the properties $(i)$ and $(iii)$ above hold and $(ii)$ holds with $q(\cdot)/s$ instead of $p(\cdot)/r$. Proceeding as in the proof of Theorem \ref{thm 1}, but now considering (\ref{Ia weight}) instead of (\ref{weight ineq}), we obtain
\begin{equation} \label{estim Ia2}
\sum_{j \in \mathbb{Z}} |(I_{\alpha}a)(j)|^s b(j) \leq C \| |a|^r \|_{\ell^{p(\cdot)/r}}^{s/r} 
\| (\mathcal{R}b)^{r/s} \|_{\ell^{(p(\cdot)/r)'}}^{s/r} .
\end{equation}
A computation gives $\| (\mathcal{R}b)^{r/s} \|_{\ell^{(p(\cdot)/r)'}}^{s/r} = \| \mathcal{R}b \|_{\ell^{(q(\cdot)/s)'}} \leq C 
\| b \|_{\ell^{(q(\cdot)/s)'}}$. So, the theorem follows from this inequality, (\ref{estim Ia2}) and (\ref{estim Ia}).
\hspace{10.5cm} $\square$

\

{\sc Proof of Theorem \ref{thm 3}.} We first consider the case $0 < \alpha < 1$. We define
\[
\mathcal{F}_{\alpha} = \left\{ \left( \left\{ \sum_{k=1}^{N} (M_{\alpha}a_k)^{\theta} \right\}^{1/\theta}, 
\left\{ \sum_{k=1}^{N} |a_k|^{\theta} \right\}^{1/\theta} \right) : N \in \mathbb{N}, 
\{a_k \}_{k=1}^{N} \subset \ell^{\infty}_{comp} \right\},
\]
where $\theta \in (1, \infty)$ and $\ell^{\infty}_{comp}$ denotes the set of sequences with finite support on $\mathbb{Z}$.

Let $\frac{1}{1-\alpha} < s < q_{-}$ be such that $(q(\cdot)/s)' \in \mathcal{B}$, and let $r$ be defined by $\frac{1}{r} = \frac{1}{s} + \alpha$. So, $1 < r < p_{-}$. By \cite[Theorem 3.3]{Hao2}, \cite[Theorem 3.23]{Cruz} (which also holds in the discrete setting), and Remark \ref{Ap cond}, there exists an universal constant $C > 0$ such that for any $(F, G) \in \mathcal{F}_{\alpha}$ and any $w=\{ w(i)\} \in \mathcal{A}_1$ 
\begin{equation} \label{FG weight}
\sum_{j \in \mathbb{Z}} [F(j)]^{s} w(j)  \leq C \left( \sum_{j \in \mathbb{Z}} [G(j)]^{r} w(j)^{r/s}  \right)^{s/r}.
\end{equation}
Now, proceeding as in the proof of Theorem \ref{thm 2}, we apply the Rubio de Francia iteration algorithm with respect to 
$\ell^{(q(\cdot)/s)'}$ to obtain
\[
\| F \|_{\ell^{q(\cdot)}} \leq C \| G \|_{\ell^{p(\cdot)}},
\]
for all pair $(F, G) \in \mathcal{F}_{\alpha}$. This is
\begin{equation} \label{fract max ineq2}
\left\| \left\{\sum_{k=1}^{N} (M_{\alpha}a_k)^{\theta} \right\}^{1/\theta} \right\|_{\ell^{q(\cdot)}} \leq 
C \left\| \left\{\sum_{k=1}^{N} |a_k|^{\theta} \right\}^{1/\theta}  \right\|_{\ell^{p(\cdot)}}.
\end{equation}
Then, letting $N \to \infty$ in (\ref{fract max ineq2}), we obtain (\ref{fract max ineq}) for the case $0 < \alpha < 1$. 

Finally, by \cite[Proposition 2.9]{Hao} and Remark \ref{Ap cond}, we have that (\ref{FG weight}) holds with $\alpha=0$ and $r=s$. Thus, by reasoning as above, the theorem follows for the case $\alpha = 0$. \hspace{2.8cm} $\square$

\bigskip
\address{
Departamento de Matem\'atica \\
Universidad Nacional del Sur (UNS) \\
Bah\'{\i}a Blanca, Argentina}
{pablo.rocha@uns.edu.ar}

\end{document}